\newtheorem{theorem}{Theorem}[section]
\newtheorem{corollary}[theorem]{Corollary}
\newtheorem{remark}[theorem]{Remark}
\newtheorem{example}[theorem]{Example}
\newproof{proof}{Proof}
\numberwithin{equation}{section}
\numberwithin{theorem}{section}
\newcommand{\NN}{\mathbb{N}}
\newcommand{\w}{\omega}
\newcommand{\IR}{\mathbb{R}}
\renewcommand{\phi}{\varphi}
\newcommand{\spn}{\mathrm{span}}
\begin{document}

\begin{frontmatter}

\title{A topological group observation on \\ the Banach--Mazur separable quotient problem}

\author{Saak S. Gabriyelyan}
\ead{saak@math.bgu.ac.il}
\address{Department of Mathematics, Ben-Gurion University of the Negev, Beer-Sheva, P.O. 653, Israel}

\author{Sidney A. Morris}
\ead{morris.sidney@gmail.com}
\address{Faculty of Science and Technology, Federation University Australia, PO Box 663, Ballarat, Victoria, 3353,  Australia \& Department of Mathematics and Statistics,  La~Trobe University, Melbourne, Victoria, 3086, Australia}

\begin{abstract}
\end{abstract}

\begin{keyword}
 Banach space  \sep Frechet space \sep quotient space \sep separable \sep topological group \sep quotient group  \sep locally convex space  \sep circle group

\MSC[2010] 46B26 \sep  46A04 \sep 54H11 \sep 46A03

\end{keyword}

\begin{abstract}
The Banach-Mazur problem, which asks if every infinite-dimensional  Banach space has an infinite-dimensional  separable quotient space, has remained unsolved for 85 years,  but has been answered in the affirmative for special cases such as reflexive Banach spaces.  It is also known that every infinite-dimensional non-normable Fr\'{e}chet space has an infinite-dimensional  separable quotient space, namely
$\IR^\w $. It is proved in this paper that every infinite-dimensional
Fr\'{e}chet space (including every infinite-dimensional Banach space), indeed every locally convex space which has a subspace which is an infinite-dimensional Fr\'{e}chet space, has an infinite-dimensional (in the topological sense) separable metrizable quotient group, namely $\mathbb{T}^\w$,  where $\mathbb{T}$ denotes the compact unit circle group.

\end{abstract}
\end{frontmatter}


\section{Introduction}


The famous problem of Stefan Banach and Stanis\l aw Mazur in the 1930s asks if every (real) Banach space has a separable infinite-dimensional quotient space. This has been shown to be true for reflexive Banach spaces \cite{Pelczynski} and more generally Banach spaces which are dual spaces
 \cite{Argyros}, and weakly compactly-generated Banach spaces \cite{AmirLindenstrauss}, with further results in \cite{KakolSaxon, KakolSaxonTodd, KakolSliwa, SaxonN, SaxonWilansky}, but the general question remains open. However, M. Eidelheit \cite{Eidelheit} proved that every non-normable Fr\'{e}chet space has a separable quotient locally convex  space, namely $\IR^\w$. We note that $\IR^\w$ cannot be a quotient locally convex space (or even a quotient group) of a Banach space, but every separable Banach space is indeed homeomorphic to $\IR^\w$, see \cite{Bessaga}.

 Noting that $\mathbb{T}^\w$ is a quotient group of $\IR^\w$, where $\mathbb{T}$ is the compact unit circle group, we see that every non-normable Fr\'{e}chet space has $\mathbb{T}^\w$ as a quotient group. This leads us to ask if every Banach space also has $\mathbb{T}^\w$ as a quotient group.  In fact we prove this and more in the theorem below.

\section{Results}

\begin{theorem} \label{t:Banach-T^N}
Let $E$ be a (Hausdorff) locally convex space over the field ${\mathbf F}$, where ${\mathbf F}=\IR$ or  $\mathbb{C}$. If $E$ has a subspace which is an infinite-dimensional Fr\'{e}chet space, then $E$ has $\mathbb{T}^\w$ as a quotient group.
\end{theorem}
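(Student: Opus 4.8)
The plan is to reduce, via the Hahn--Banach theorem, to the case where $E$ itself is an infinite-dimensional Fr\'echet space, and then to produce an explicit continuous, open, surjective homomorphism $\phi$ of $E$ onto $\mathbb{T}^\w$; its kernel is the desired closed subgroup and $E/\ker\phi\cong\mathbb{T}^\w$ by the open mapping property. For the reduction, suppose $F\subseteq E$ is an infinite-dimensional Fr\'echet subspace and that one has produced a quotient homomorphism $q\colon F\to\mathbb{T}^\w$ of the special form $q(y)=\big(e^{2\pi i f_n(y)}\big)_{n\in\NN}$ with the $f_n$ continuous and $\IR$-linear on $F$ (this is exactly the shape of the map built below). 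By Hahn--Banach each $f_n$ extends to a continuous $\IR$-linear functional $\tilde f_n$ on $E$, and then $\tilde q:=\big(e^{2\pi i\tilde f_n}\big)_n\colon E\to\mathbb{T}^\w$ is a continuous homomorphism with $\tilde q|_F=q$. It is surjective because $q$ already is, and it is open because for any $0$-neighbourhood $U$ of $E$ the set $U\cap F$ is a $0$-neighbourhood of $F$, so $\tilde q(U)\supseteq q(U\cap F)$ is a neighbourhood of the identity of $\mathbb{T}^\w$. Hence $E/\ker\tilde q\cong\mathbb{T}^\w$, and it suffices to treat $E=F$.

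So assume $E$ is an infinite-dimensional Fr\'echet space whose topology is given by an increasing sequence $(p_k)_{k\in\NN}$ of seminorms. The heart of the argument is to construct, by induction on $n$, vectors $x_n\in E\setminus\{0\}$ and functionals $f_n\in E'$ with $f_i(x_j)=\delta_{ij}$ and $p_k(x_n)\le 2^{-n}$ for all $k\le n$. At stage $n$ the subspace $M_n:=\bigcap_{i<n}\ker f_i$ is closed, of finite codimension (hence still infinite-dimensional), and is the range of the continuous projection $\pi_n(x)=x-\sum_{i<n}f_i(x)\,x_i$. I would pick any nonzero $v\in M_n$, rescale it inside $M_n$ to a vector $x_n$ with $p_n(x_n)\le 2^{-n}$ (so automatically $f_i(x_n)=0$ for $i<n$), use that the Hausdorff locally convex space $M_n$ has enough continuous functionals to choose $g\in M_n'$ with $g(x_n)=1$, and set $f_n:=g\circ\pi_n\in E'$; then $f_n(x_j)=0$ for $j<n$ and $f_n(x_n)=1$. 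Since $\sum_n p_k(x_n)<\infty$ for every $k$, any series $\sum_n\theta_n x_n$ with $(\theta_n)$ bounded converges in $E$ (absolute convergence implies convergence in a Fr\'echet space), and by continuity and biorthogonality $f_m\big(\sum_n\theta_n x_n\big)=\theta_m$.

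Now define $\phi\colon E\to\mathbb{T}^\w$ by $\phi(x)=\big(e^{2\pi i f_n(x)}\big)_{n\in\NN}$ (when $\mathbf F=\C$ replace each $f_n$ by its real part, which is still continuous and biorthogonal to $(x_n)$ over $\IR$). This is a continuous homomorphism. It is surjective: given $(e^{2\pi i\theta_n})_n\in\mathbb{T}^\w$ with $\theta_n\in[0,1)$, the vector $x=\sum_n\theta_n x_n$ satisfies $f_m(x)=\theta_m$, hence $\phi(x)=(e^{2\pi i\theta_n})_n$. It is open: since homomorphisms are open once open at $0$, it is enough to show $\phi(U)$ is a neighbourhood of the identity for $U=\{x:p_k(x)<\eps\}$. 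Choose $N$ with $\sum_{n>N}p_k(x_n)<\eps/2$ and then $\delta>0$ with $\delta\sum_{n\le N}p_k(x_n)<\eps/2$; for any $(t_n)_n$ with $t_n=e^{2\pi i\theta_n}$, $|\theta_n|<\delta$ for $n\le N$ and $\theta_n\in[0,1)$ for $n>N$, the vector $x=\sum_n\theta_n x_n$ lies in $U$ (estimate $p_k(x)\le\delta\sum_{n\le N}p_k(x_n)+\sum_{n>N}p_k(x_n)<\eps$) and $\phi(x)=(t_n)_n$. Thus $\phi(U)$ contains a basic neighbourhood of the identity of $\mathbb{T}^\w$, so $\phi$ is a quotient homomorphism.

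The step I expect to be the main obstacle is the construction of the biorthogonal system in the second paragraph. One cannot simply choose a linearly independent null sequence and recover its coordinate functionals afterwards by Hahn--Banach, since those coordinate functionals need not be continuous; the system must be built so that the $f_n$ are genuinely continuous elements of $E'$ \emph{and} the $x_n$ decay fast enough ($p_k(x_n)\le 2^{-n}$) that the series defining preimages both converge and have controllably small seminorms --- the latter being precisely what forces $\phi$ to be open. The inductive splitting $E=\spn\{x_1,\dots,x_{n-1}\}\oplus M_n$ with $M_n=\bigcap_{i<n}\ker f_i$ is what makes both requirements compatible. Everything else --- continuity and the homomorphism property of $\phi$, surjectivity, and the Hahn--Banach reduction --- is then routine, and in particular one never needs any norm bound on the $f_n$ themselves.
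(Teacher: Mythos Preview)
Your proof is correct and follows the same overall shape as the paper's --- build continuous real functionals $f_n$ biorthogonal to a rapidly decaying sequence in the Fr\'echet subspace, extend them to $E$ by Hahn--Banach, and define the quotient map coordinatewise by $x\mapsto\big(e^{2\pi i f_n(x)}\big)_n$ --- but you replace the two external inputs the paper uses by elementary arguments. Where the paper invokes \'Sliwa's theorem that every infinite-dimensional Fr\'echet space has a basic sequence (and then uses the associated coefficient functionals), you build only a biorthogonal system $(x_n,f_n)$ by the standard finite-codimension/projection induction, simultaneously forcing the decay $p_k(x_n)\le 2^{-n}$ for $k\le n$; this is weaker than a basic sequence but is all that is needed. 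And where the paper appeals to the Polish-group Open Mapping Theorem (applied to the restriction of the map to the separable Fr\'echet space $H$) to obtain openness, you verify openness by a direct seminorm estimate using that decay. The trade-off is clear: the paper's proof is shorter but cites two nontrivial results, while yours is fully self-contained; on the other hand, your biorthogonal system does not give a Schauder basic sequence, so the paper's route yields a bit more structural information about $H$ even if that extra strength is not used here.
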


\begin{proof}
By \cite[Theorem~2]{Sliwa}, $F$ has a basic sequence $\{ e_n: n\in\w\}$, that is, each element of the closure $H$ of the span  of $\{ e_n: n\in\w\}$ can be written uniquely as $x=\sum_{n\in\w} a_n e_n$ with $a_n\in  \mathbf{F}$  and the coefficient functionals $\chi'_n: H\to\mathbf{F}, \chi'_n(x):= a_n,$ are continuous. For every $n\in\w$, denote by $\chi_n$ some extension of $\chi_n$ onto the whole space $E$. For every
 $n\in\w$, choose $m_n\in\w$ such that $e_n/m_n \to 0$ in $E$ (such numbers exist since $F$ is metrizable) and set $f_n:= \mathrm{Re}(\chi_n)$ (so $f_n$ is a real continuous linear functional of $E$).
 Define a homomorphism $R: E\to \mathbb{T}^\w$ by
\[
R(x) := \left( e^{2\pi i \, 2^n m_n f_n(x)} \right), \quad x\in E.
\]
Let us show that $R|_H$ is surjective. Indeed, fix arbitrarily $z=\left( e^{2\pi i a_n} \right)\in \mathbb{T}^\w$, where $a_n \in [0,1)$ for every $n\in\w$. For every $n\in\w$, set $b_n:= a_n /(2^n m_n)$ and $x_n=\sum_{i=0}^{n} b_n e_n$. Then, for every $N< n<k$ and each seminorm $p$ on the Fr\'{e}chet space $F$, we have
\[
p(x_k-x_n)\leq \sum_{i=n+1}^{k} \frac{a_n}{2^n} p\big( e_n/m_n\big) \to 0 \quad \mbox{ at }  N\to \infty.
\]
Therefore the sequence $\{ x_n\}$ is Cauchy, and hence it converges to $x=\sum_{n\in\w} b_n e_n \in F$. Since all the numbers $b_n$ are real, it follows that $R(x)=z$. Thus $R|_H$ is surjective.

Since the group $\mathbb{T}^\w$  carries the product topology and all the homomorphisms $x \mapsto e^{2\pi i \, 2^n f_n(x)}\in \mathbb{T}$ are continuous, we obtain that $R$ is continuous. Therefore by the Open Mapping Theorem,  Theorem 1.2.6 of \cite{Beck-Kechris}, the surjective continuous homomorphism $R|_H$ from the separable Fr\'{e}chet space $H$ onto the compact metrizable group $\mathbb{T}^\w$ is open.
Thus the map $R$ also is open. \qed
\end{proof}

\begin{corollary} \label{c:factor-Frechet}
Every infinite-dimensional Fr\'{e}chet space, and in particular every infinite-dimensional Banach space, has $\mathbb{T}^\w$ as a quotient group.
\end{corollary}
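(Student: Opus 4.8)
The plan is to read this off Theorem~\ref{t:Banach-T^N} with essentially no work. Given an infinite-dimensional Fr\'echet space $E$, I would simply take $E$ itself in the role of the required subspace: $E$ is a Hausdorff locally convex space over $\IR$ or $\C$, and it trivially has a subspace, namely $E$, which is an infinite-dimensional Fr\'echet space. Hence Theorem~\ref{t:Banach-T^N} applies verbatim and produces a continuous open surjective homomorphism from $E$ onto $\mathbb{T}^\w$, i.e.\ $\mathbb{T}^\w$ is a quotient group of $E$.

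For the ``in particular'' clause, I would recall that every Banach space is a Fr\'echet space --- a complete metrizable locally convex space whose topology is generated by a single norm --- so every infinite-dimensional Banach space is covered by the statement just proved.

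I do not expect any genuine obstacle: the only point to verify is the purely formal observation that a space is a subspace of itself, after which the corollary is immediate. If a self-contained argument were preferred, one could instead re-run the proof of Theorem~\ref{t:Banach-T^N} with $F=H=E$, invoking the basic sequence $\{e_n:n\in\w\}$ from \cite[Theorem~2]{Sliwa} directly in $E$; in that case the coefficient functionals $\chi'_n$ are already defined and continuous on all of $H=E$, so no extension step is needed, and the construction of $R$ and the surjectivity and openness arguments go through unchanged.
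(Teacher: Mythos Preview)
Your proposal is correct and matches the paper's approach: the corollary is stated without proof immediately after Theorem~\ref{t:Banach-T^N}, so it is meant to follow by taking $E$ itself as the infinite-dimensional Fr\'echet subspace. Your additional remark about rerunning the argument with $F=H=E$ is fine but unnecessary.
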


\begin{remark} {\em
We note that by Theorem 8.4.6 of \cite{Hofmann-Morris} and Corollary \ref{c:factor-Frechet}: a compact group $G$ is a quotient group of an infinite-dimensional Fr\'{e}chet space if and only if $G$ is topologically isomorphic to $\mathbb{T}^N$ for some $N\leq \aleph_0$.}
\end{remark}

Example \ref{exa:Banach-1} shows the the condition in Theorem \ref{t:Banach-T^N} is not necessary, but Example \ref{exa:Banach-2} shows that the condition cannot be dropped entirely since there is a complete locally convex space which does not have $\mathbb{T}^\w$ as a quotient group.
\begin{example} \label{exa:Banach-1}
Let $(E,\tau_E)$ be the Banach space $\ell_\infty$ endowed with the topology $\tau_E$ induced from the complete metrizable space $\IR^\w$. Then (1) $E$ does not contain an infinite-dimensional Fr\'{e}chet subspace; and (2)  $E$ has $\mathbb{T}^\w$ as a quotient group.
\end{example}

\begin{proof}
(1) Let  $(F,\tau_F)$ be a Fr\'{e}chet subspace of $(E,\tau_E)$. Then it is a subspace of $\IR^\w$. Hence  $F$  with the weak topology denoted by $F_w$ is also a subspace of $\IR^\w$. Therefore $F_w$ is metrizable. On the other hand, since $F=\bigcup_{n\in\w} (F\cap nB)$, where $B$ is the closed unit ball of $\ell_\infty$, the Baire category theorem implies that $F\cap B$ is a neighborhood of zero in $F$ (note that $B$ is closed in $(E,\tau_E)$). Therefore the topology $\tau_F$ of $F$ is finer than the norm topology $\tau_\infty$ induced from $\ell_\infty$. Clearly, we also have $\tau_F\leq \tau_\infty$, and hence $(F,\tau_F)$ is a subspace of the Banach space $\ell_\infty$. Thus $(F,\tau_F)$ is a Banach space. But a Banach space in the weak topology is metrizable if and only if it is finite-dimensional, see for example Theorem 1.5 of \cite{GKP}. Thus $F$ is finite-dimensional.
 
(2) To show that $(E,\tau_E)$ has $\mathbb{T}^\w$ as a quotient group consider the map
\[
T: E\to \mathbb{T}^\w, \quad T(x_n)=\big( e^{2\pi i x_n}\big) \; \mbox{ for } \; (x_n)\in E.
\]
It is clear that $T$ is a surjective and open continuous homomorphism. \qed
\end{proof}

Denote by $\phi$ the complete countably infinite-dimensional locally convex space which is the strong dual space of $\IR^\w$. We note that $\phi$ is the inductive limit of $\IR^n$s.

\begin{example} \label{exa:Banach-2}
There is no continuous surjective homomorphism from $\phi$ onto $\mathbb{T}^\w$.
\end{example}

\begin{proof}
Let $T:\phi \to \mathbb{T}^\w$ be any continuous homomorphism. We have to show that $T$ is not onto. For every $n\geq 1$, denote by $T_n$ the restriction of $T$ to the closed vector subspace $\IR^n$ of $\phi$. Then $T_n$ induces a continuous monomorphism ${S_n}$ from $\IR^n/\ker(T_n)$ to $\mathbb{T}^\w$. By Theorem 9.11 of \cite{HR1} this implies that the group $\IR^n/\ker(T_n)$ is topologically isomorphic to $\IR^{k_n} \times \mathbb{T}^{s_n}$ for some integers $k_n$ and $s_n$. Hence ${S}_n : \IR^{k_n} \times \mathbb{T}^{s_n} \to \mathbb{T}^\w $ is a continuous injective homomorphism. It follows that 
\begin{equation} \label{equ:Banach-Problem-1}
T(\phi)= \bigcup_{n\geq 1} \left( \bigcup_{p\in\NN} {S}_n \big( [-p,p]^{k_n} \times \mathbb{T}^{s_n} \big) \right).
\end{equation}
Since ${S}_n$ is a homeomorphism on the compact finite-dimensional  topological space $[-p,p]^{k_n} \times \mathbb{T}^{s_n}$, the equality (\ref{equ:Banach-Problem-1}) implies that $T(\phi)$ is countable dimensional topological space. However, the metrizable group $ \mathbb{T}^\w$ is not a countable dimensional topological space by Corollary 3.13.6 of \cite{vanMill}. Thus $T$ is not surjective.\qed
\end{proof}

\begin{remark}{\em
We mention that A. Leiderman, M. Tkachenko and the second author in their paper \cite{LeidermanMorrisTkachenko} have examined the question: which topological groups $G$ have separable quotient groups? Their results include substantial information on the cases that $G$ is (i) a compact group, (ii) a pro-Lie group, (iii) a pseudocompact group and (iv) a precompact group. }
\end{remark}

\noindent\textbf{Acknowledgement} The authors thank Karl Heinrich Hofmann for useful comments. The second author thanks Ben Gurion University of the Negev for hospitality during his visit.

\section*{References}


\begin{thebibliography}{99}




\bibitem{AmirLindenstrauss} D.~Amir, J.~Lindenstrauss,
\newblock The structure of weakly compact sets in Banach spaces,
\newblock \textit{Ann. of Math.} \textbf{88} (1968), 33--46.

\bibitem{Argyros} S.A.~Argyros, P.~Dodos, V.~Kanellopoulos,
\newblock Unconditional families in Banach spaces,
\newblock \textit{Math. Ann.} \textbf{341} (2008), 15--38.


\bibitem{Beck-Kechris}
H. Becker, A.S. Kechris, The Descriptive Set Theory of Polish Group Actions, Cambridge University Press, Cambridge, 1996.

\bibitem{Bessaga}
C. Bessaga, A.A. Pe\l czy\'nski,
\newblock Selected Topics in Infinite-Dimensional Topology,
\newblock Polish Scientific Publishers, Warszawa, Poland, 1975.


\bibitem{Eidelheit} M. Eidelheit,
\newblock Zur Theorie der Systeme linearer Gleichungen,
\newblock \textit{Studia Math.} \textbf{6} (1936), 130--148.


\bibitem{GKP}
S.~Gabriyelyan, J.~K{\c{a}}kol,  G. Plebanek, The Ascoli property for function spaces and the weak topology of Banach and Fr\'echet spaces, Studia Math. \textbf{233} (2016), 119--139.

\bibitem{HR1}
E.~Hewitt, K.A.~Ross,  \emph{Abstract Harmonic Analysis}, Vol. I, 2nd ed. Springer-Verlag, Berlin, 1979.


\bibitem{Hofmann-Morris}
K.H. Hofmann, S.A. Morris, \emph{The structure of compact groups},  3rd ed. De Gruyter, Berlin, 2013.


\bibitem{KakolSaxon} J. K{\c{a}}kol, S.A. Saxon,
\newblock Separable quotients in $ C_{c}( X)$, $ C_{p}( X) $, and their duals,
\newblock \textit{Proc. Amer. Math. Soc.} \textbf{145} (2017), 3829--3841.

\bibitem{KakolSaxonTodd}  J. K{\c{a}}kol, S.A. Saxon, A. Todd,
\newblock Barrelled spaces with(out) separable quotients,
\newblock \textit{Bull. Austral. Math. Soc.} \textbf{90} (2014), 295--303.


\bibitem{KakolSliwa}  J. K{\c{a}}kol, W. \'{S}liwa,
\newblock Remarks concerning the separable quotient problem,
\newblock \textit{Note di Matematica} \textbf{13} (1993), 277--282.

\bibitem{LeidermanMorrisTkachenko} A.G. Leiderman, S.A. Morris,  M.G. Tkachenko,
\newblock The separable quotient problem for topological groups,
\newblock \url{https://arxiv.org/abs/1707.09546}.

\bibitem{Pelczynski} A.~Pe\l czy\'nski,
\newblock Some problems on bases in Banach and Fr\'echet spaces,
\newblock \textit{Israel J. Math.} \textbf{2} (1964), 132--138.



\bibitem{vanMill}
J. van Mill, The Infinite-Dimensional Topology of Function Spaces, Elsevier, North-Holland, Amsterdam, 2006.


\bibitem{SaxonN} S.A.~Saxon, P.P.~Narayanaswami,
\newblock Metrizable (LF)-spaces, (db)-Spaces, and the separable quotient problem,
\newblock \textit{Bull. Austral. Math. Soc.} \textbf{23} (1981), 65--80.


\bibitem{SaxonWilansky} S.~Saxon, A.~Wilansky,
\newblock The equivalence of some Banach space problems, 
\newblock \textit{Colloq. Math.} \textbf{37} (1977), 217--226.


\bibitem{Sliwa}
 W. \'{S}liwa,
 \newblock Every infinite-dimensional non-archimedian Fr\'{e}chet space has an orthogonal basic sequence, \textit{Indag. Mathem.} \textbf{11} (2000), 463--466.


\end{thebibliography}
\end{document}